\newtheorem{theorem}{Theorem}
\newtheorem{lemma}[theorem]{Lemma}
\newtheorem{proposition}[theorem]{Proposition}
\theoremstyle{definition}
\newtheorem{remark}[theorem]{Remark}
\begin{document}

\title[Algebras generated by two quadratic elements]
{Algebras generated by two quadratic elements}

\author[{V. Drensky, J. Szigeti, L. van Wyk}]
{Vesselin Drensky, Jen\H{o} Szigeti, Leon van Wyk}
\address{Institute of Mathematics and Informatics, Bulgarian Academy of Sciences,
1113 Sofia, Bulgaria}
\email{drensky@math.bas.bg}
\address{Institute of Mathematics, University of Miskolc, Miskolc, Hungary 3515}
\email{jeno.szigeti@uni-miskolc.hu}
\address{Department of Mathematical Sciences, Stellenbosch University\\
P/Bag X1, Matieland 7602, Stellenbosch, South Africa}
\email{lvw@sun.ac.za}
\thanks{The second author was supported by OTKA of Hungary No. K61007.}
\thanks{The third author was supported by the National Research Foundation of
South Africa under Grant No. UID 61857. Any opinion, findings and conclusions
or recommendations expressed in this material are those of the authors and therefore
the National Research Foundation does not accept any liability in regard thereto.}
\subjclass[2000]{15A30, 16R20, 16S15; 16S50}
\keywords{Idempotents, quadratic elements, embedding in matrices, free products of algebras,
algebras with polynomial identity}

\begin{abstract}
Let $K$ be a field of any characteristic and let $R$ be an algebra
generated by two elements satisfying quadratic equations. Then $R$ is a homomorphic image of
$F=K\langle x,y\mid x^2+ax+b=0,y^2+cy+d=0\rangle$ for suitable $a,b,c,d\in K$.
We establish that $F$ can be embedded into the $2\times 2$ matrix algebra
$M_2(\overline{K}[t])$ with entries from the polynomial algebra $\overline{K}[t]$
over the algebraic closure of $K$ and that $F$ and $M_2(\overline{K})$ satisfy the same
polynomial identities as $K$-algebras. When the quadratic equations have double zeros,
our result is a partial case of more general results
by Ufnarovskij, Borisenko and Belov from the 1980's. When each of the equations has different zeros,
we improve a result of Weiss, also from the 1980's.
\end{abstract}

\maketitle

\section*{Introduction}

In this paper $K$ is an arbitrary field of any characteristic. All algebras are unital
and over $K$ or over its algebraic closure $\overline{K}$.

Initially this project was motivated by results on the minimum number
of idempotents needed to generate various kind of algebras, see e.g.
Roch and Silbermann \cite{RS}, Krupnik \cite{K},
Kelarev, van der Merwe and van Wyk \cite{KMW},
van der Merwe and van Wyk \cite{MW}, Goldstein and Krupnik \cite{GK}.
In particular, Krupnik \cite{K} exhibited three idempotent matrices which
generate $M_n(K)$. He also
showed that any algebra generated
by two idempotents satisfies the standard identity of degree
four, and hence $M_{n}(K)$ cannot be generated by two idempotent matrices if $n\geq 3$.
Independently, Weiss \cite{W1, W2} established that if an infinite dimensional algebra
$R$ is generated by two idempotents and $K$ is of characteristic different from 2, then
$R$ is isomorphic to a subalgebra of $M_2(K[v])$.

It is worthy to study algebras generated by idempotents and nilpotent elements
and this was another motivation for our project.
Van der Merwe and van Wyk \cite[Theorem 2.1]{MW} showed that
the algebra $M_n(K)$, $n\geq 2$, over an arbitrary field $K$ can be generated by
an idempotent matrix and a nilpotent matrix of index $n$.

Our third motivation came from the well developed theory of (noncommutative) monomial algebras.
For an ideal $J$ of the free associative algebra $K\langle x_1,\ldots,x_d\rangle$
generated by a finite set of monomials Ufnarovskij \cite{U1} associated an oriented graph
and described the growth of the factor algebra $K\langle x_1,\ldots,x_d\rangle/J$
in terms of paths in the graph. In particular, the growth of such an algebra is either
polynomial or exponential and its Hilbert series is rational. Borisenko \cite{Bo}
showed that a finitely presented monomial algebra is of polynomial growth if and only if it
satisfies a polynomial identity. Then for a suitable $n$ it can be embedded into the matrix algebra
$M_n(K[t])$ over the polynomial algebra $K[t]$. This allows to treat successfully the case when
the algebra has the presentation
\[
F_{p,q}\cong K\langle x,y\mid x^p=y^q=0\rangle,\quad p,q\geq 2,
\]
i.e., $F_{p,q}$ is generated by two elements and the only relations give the nilpotency of the generators.
If $q\geq 3$, then it is easy to see (and it follows also from \cite{U1}) that the monomials
$xyx$ and $xy^2x$ generate a free subalgebra of $F_{p,q}$. If $p=q=2$, the approach of \cite{U1, Bo} gives that
the algebra $F_{2,2}$ can be embedded into $M_n(K[t])$ for a suitable $n\leq 6$.
Further, Belov \cite{Be} developed the theory of monomial algebras
associated with infinite periodic words. If $w=x_{i_1}\cdots x_{i_m}$ is a monomial, then the algebra $A_w$
has a basis consisting of all finite subwords of the infinite word
\[
w^{\infty}=\ldots x_{i_1}\cdots x_{i_m}x_{i_1}\cdots x_{i_m}\ldots
\]
and all monomials which are not subwords of $w^{\infty}$ are equal to 0 in $A_w$.
Again, the algebra $A_w$ can be embedded into $M_n(K[t])$ and, additionally,
both algebras have the same polynomial identities.
Tracing step-by-step the proof of Belov, one sees that the algebras $F_{2,2}$ and $A_{xy}$ are isomorphic
and can be embedded into $M_2(K[t])$. An account of the work of Ufnarovskij, Borisenko and Belov
can be found in the survey
articles by Ufnarovskij \cite{U2} and Belov, Borisenko and Latyshev \cite{BBL}.
For a background on PI-algebras see e.g. \cite{DF}.

The purpose of this paper is to describe algebras generated by two elements satisfying quadratic equations.
We prove that
the algebra with presentation
\[
F=K\langle x,y\mid x^2+ax+b=0,y^2+cy+d=0\rangle
\]
for suitable $a,b,c,d\in K$ can be embedded into the $2\times 2$ matrix algebra
$M_2(\overline{K}[t])$ with entries from the polynomial algebra $\overline{K}[t]$
over the algebraic closure of $K$ and that $F$ and $M_2(\overline{K})$ satisfy the same
polynomial identities as $K$-algebras. Our embedding is similar to the embedding obtained
by the methods of Belov \cite{Be}.
As in \cite{W1, W2} we show that $F$ is a free module of rank 4 over its centre and
all proper homomorphic images of $F$ are
finite dimensional.

Our algebra $F$ is a free product of two two-dimensional algebras,
\[
F=K[x\mid x^2+ax+b=0]\ast K[y\mid y^2+cy+d=0].
\]
There is an obvious analogy of $F$ in group theory. The free product of two cyclic groups
\[
G=\langle x\mid x^p=1\rangle\ast \langle y\mid y^q=1\rangle,\quad p,q\geq 2,
\]
contains a free subgroup if $q\geq 3$, and is metabelian (solvable of class 2) if $p=q=2$,
see e.g. \cite[Problem 19, p. 195]{MKS}.

\section*{Main results}

Let $S$ be any $K$-algebra. Then $S$ may be considered as a $K$-subalgebra of the $\overline{K}$-algebra
$\overline{S}=\overline{K}\otimes_KS$ via the embedding $s\to 1\otimes s$, $s\in S$. In particular, we assume that
the free associative algebra $K\langle x_1,x_2,\ldots\rangle$ is naturally embedded into
$\overline{K}\langle x_1,x_2,\ldots\rangle$.
For a $K$-algebra $R$ we denote by $T_K(R)$ the T-ideal of $K\langle x_1,x_2,\ldots\rangle$
consisting of all polynomial identities of $R$. The corresponding notation when $R$ is a $\overline{K}$-algebra
is $T_{\overline{K}}(R)$.
The following fact is well known, see e.g. \cite[p. 12, Remark 1.2.9 (ii)]{DF}.

\begin{lemma}\label{Lemma 1}
If the field $K$ is infinite, $C$ is a commutative (unital) $K$-algebra, and the $K$-algebra $S$ satisfies
a polynomial identity, then $S$ and $C\otimes_KS$ have the same polynomial identities. In particular, $S$
and $\overline{S}=\overline{K}\otimes_KS$ have the same polynomial identities
and $T_K(S)=T_{\overline{K}}(\overline{S})\cap K\langle x_1,x_2,\ldots\rangle$.
\end{lemma}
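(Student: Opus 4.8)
The plan is to establish the sharper claim $T_K(S)=T_K(C\otimes_K S)$ by a chain of elementary reductions ending in a one-variable interpolation argument; the two displayed consequences will then drop out formally by taking $C=\overline{K}$.

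First, one inclusion is free: since $C$ is unital, $s\mapsto 1\otimes s$ is an injective homomorphism of $K$-algebras $S\hookrightarrow C\otimes_K S$, so $T_K(C\otimes_K S)\subseteq T_K(S)$. (The hypothesis that $S$ be a PI-algebra only serves to make $T_K(S)\ne 0$, so that the statement is not vacuous; it plays no other role.) For the reverse inclusion I would fix $f=f(x_1,\dots,x_n)\in T_K(S)$ and show that $f$ vanishes on every tuple $u_1,\dots,u_n\in C\otimes_K S$. Any such tuple involves only finitely many elements of $C$ and hence lies in $C_0\otimes_K S$ for a finitely generated $K$-subalgebra $C_0\subseteq C$; as $K$ is a field, $S$ is flat, so $C_0\otimes_K S\hookrightarrow C\otimes_K S$ and it suffices to treat finitely generated $C$. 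Such a $C$ is a homomorphic image of some polynomial algebra $K[\xi_1,\dots,\xi_m]$, and tensoring the corresponding surjection with $S$ further reduces matters to $C=K[\xi_1,\dots,\xi_m]$. Finally, writing $K[\xi_1,\dots,\xi_m]\otimes_K S\cong\bigl(K[\xi_1,\dots,\xi_{m-1}]\otimes_K S\bigr)[\xi_m]$ and iterating, everything comes down to a single claim: \emph{if a $K$-algebra $A$ satisfies $f$, then so does the polynomial algebra $A[\xi]$ in one central variable.}

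To prove that claim I would take $u_1,\dots,u_n\in A[\xi]$ and expand $f(u_1,\dots,u_n)=\sum_{l=0}^{N}w_l\xi^l$ with $w_l\in A$, which is legitimate because $\xi$ is central. For every $\lambda\in K$ the specialization $\xi\mapsto\lambda$ is a homomorphism of $K$-algebras $A[\xi]\to A$, so $\sum_{l=0}^{N}w_l\lambda^l=f(u_1|_{\xi=\lambda},\dots,u_n|_{\xi=\lambda})=0$ since $f\in T_K(A)$. Choosing $N+1$ distinct scalars $\lambda_0,\dots,\lambda_N\in K$---possible precisely because $K$ is infinite---the Vandermonde matrix $(\lambda_j^{\,l})_{0\le j,l\le N}$ is invertible over $K$, which forces $w_0=\dots=w_N=0$; hence $f(u_1,\dots,u_n)=0$. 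This yields $T_K(S)=T_K(C\otimes_K S)$. Taking $C=\overline{K}$ gives $T_K(S)=T_K(\overline{S})$; and since $T_{\overline{K}}(\overline{S})\cap K\langle x_1,x_2,\dots\rangle$ is, by definition, the set of $K$-polynomials that vanish on $\overline{S}$---that is, $T_K(\overline{S})$---the identity $T_K(S)=T_{\overline{K}}(\overline{S})\cap K\langle x_1,x_2,\dots\rangle$ follows as well.

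I expect essentially no obstacle: every step is bookkeeping rather than an idea, and the single point requiring a modicum of care is the passage from $A$ to $A[\xi]$, where the interpolation is carried out on coefficients $w_l$ lying in a module rather than a field---harmless, as there are only finitely many of them and the inverting Vandermonde matrix has entries in $K$. It is worth noting that the infinitude of $K$ is indispensable: over a finite field $K=\mathbb{F}_q$ the commutative algebra $S=K$ satisfies the identity $x^q-x$, while $S[\xi]=\mathbb{F}_q[\xi]$ does not, so the conclusion genuinely fails there.
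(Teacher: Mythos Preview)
Your argument is correct. The paper does not actually prove this lemma: it is stated as a well-known fact with a reference to \cite[p.~12, Remark 1.2.9(ii)]{DF}, and no proof is given in the text. What you have written is precisely the standard reduction-to-$A[\xi]$-plus-Vandermonde proof that one finds in such references, carried out cleanly; your remark that the PI hypothesis on $S$ is inessential to the equality $T_K(S)=T_K(C\otimes_K S)$ is also correct, and the $\mathbb{F}_q$ counterexample showing the necessity of infinite $K$ is apt.
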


We fix $\delta,\varepsilon=0$ or $1$ and elements $\alpha_1,\alpha_2,\beta_1,\beta_2\in \overline{K}$,
$\alpha_1,\beta_1\not=0$. We define the following $2\times 2$ matrices with entries in
the polynomial algebra $\overline{K}[t]$:
\[
X=\left(\begin{matrix}
\alpha_2+\delta\alpha_1&\alpha_1t\\
0&\alpha_2\\
\end{matrix}\right),\quad
Y=\left(\begin{matrix}
\beta_2&0\\
\beta_1t&\beta_2+\varepsilon\beta_1\\
\end{matrix}\right),
\]
\[
U=\left(\begin{matrix}
\delta&t\\
0&0\\
\end{matrix}\right),\quad
V=\left(\begin{matrix}
0&0\\
t&\varepsilon\\
\end{matrix}\right),
\]
i.e., $X=\alpha_1U+\alpha_2I$, $Y=\beta_1V+\beta_2I$, where $I$ is the identity $2\times 2$ matrix.
Direct computations give
\[
U^2=\delta U,\quad V^2=\varepsilon V,
\]
\[
XY=\left(\begin{matrix}
(\alpha_2+\delta\alpha_1)\beta_2+\alpha_1\beta_1t^2&\alpha_1t(\beta_2+\varepsilon\beta_1)\\
\alpha_2\beta_1t&\alpha_2(\beta_2+\varepsilon\beta_1)\\
\end{matrix}\right),
\]
\[
YX=\left(\begin{matrix}
\beta_2(\alpha_2+\delta\alpha_1)&\beta_2\alpha_1t\\
\beta_1t(\alpha_2+\delta\alpha_1)&\beta_1\alpha_1t^2+(\beta_2+\varepsilon\beta_1)\alpha_2\\
\end{matrix}\right),
\]
\[
[X,Y]=XY-YX=\alpha_1\beta_1t\left(\begin{matrix}
t&\varepsilon\\
-\delta&-t\\
\end{matrix}\right),\quad
[X,Y]^2=\alpha_1^2\beta_1^2t^2(t^2-\delta\varepsilon)I.
\]

\begin{proposition}\label{Proposition 2}
The $K$-subalgebra $R$ generated by $X$ and $Y$ in $M_2(\overline{K}[t])$ satisfies the same polynomial
identities as the $K$-algebra $M_2(\overline{K})$.
\end{proposition}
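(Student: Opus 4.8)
The statement asserts an equality of $T$-ideals, so the plan is to prove the inclusions $T_K(M_2(\overline K))\subseteq T_K(R)$ and $T_K(R)\subseteq T_K(M_2(\overline K))$ separately.

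The first inclusion does not use the particular shape of $X$ and $Y$. Since $R$ is a $K$-subalgebra of $M_2(\overline K[t])$, it satisfies every identity of $M_2(\overline K[t])$; and since $\overline K$ is infinite, $M_2(\overline K[t])$ satisfies every identity of $M_2(\overline K)$, because an identity of $M_2(\overline K)$ evaluated on matrices over $\overline K[t]$ gives a matrix whose entries are polynomials in $t$ vanishing at every point of $\overline K$, hence vanishing. (Equivalently, apply Lemma~\ref{Lemma 1} over the infinite base field $\overline K$ with $C=\overline K[t]$.) Thus $T_K(M_2(\overline K))\subseteq T_K(M_2(\overline K[t]))\subseteq T_K(R)$.

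For the reverse inclusion I would specialise $t$. Fix $\lambda\in\overline K$ with $\lambda\neq 0$ and $\lambda^2\neq\delta\varepsilon$ --- possible since $\overline K$ is infinite --- so that $t\mapsto\lambda$ induces a $K$-algebra homomorphism of $R$ onto the subalgebra $R_\lambda\subseteq M_2(\overline K)$ generated by $X(\lambda)$ and $Y(\lambda)$; then $T_K(R)\subseteq T_K(R_\lambda)$, and it suffices to show $T_K(R_\lambda)=T_K(M_2(\overline K))$. By the displayed formula $[X(\lambda),Y(\lambda)]^2=cI$ with $c=\alpha_1^2\beta_1^2\lambda^2(\lambda^2-\delta\varepsilon)\neq 0$; as $c$ is algebraic over $K$, the ring $K[cI]$ is a field, so $[X(\lambda),Y(\lambda)]$ is a unit in $R_\lambda$. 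Hence $R_\lambda$ stabilises no line in $\overline K^2$ (otherwise it would be triangularisable and all of its commutators nilpotent), so the $\overline K$-span of $R_\lambda$ is all of $M_2(\overline K)$. Now standard PI-ring theory pins down $R_\lambda$: it has no nonzero nilpotent ideal (the $\overline K$-span of one would be a nilpotent ideal of the simple ring $M_2(\overline K)$) and its centre is a field $F_\lambda I$ with $K\subseteq F_\lambda\subseteq\overline K$ (the $\overline K$-span of the centre lies in $Z(M_2(\overline K))=\overline K I$); therefore $R_\lambda$ is simple (in a semiprime PI-ring every nonzero ideal meets the centre), hence primitive, hence by Kaplansky's theorem finite-dimensional over $F_\lambda$ of dimension the square of its PI-degree, which is at most $2$ since $R_\lambda\subseteq M_2(\overline K)$; since also $\dim_{F_\lambda}R_\lambda\ge 4$ (the $\overline K$-span being $M_2(\overline K)$), we get $\dim_{F_\lambda}R_\lambda=4$ and the multiplication map $\overline K\otimes_{F_\lambda}R_\lambda\to M_2(\overline K)$ is an isomorphism. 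If $K$, hence $F_\lambda$, is infinite, Lemma~\ref{Lemma 1} over $F_\lambda$ gives $T_{F_\lambda}(R_\lambda)=T_{F_\lambda}(\overline K\otimes_{F_\lambda}R_\lambda)=T_{F_\lambda}(M_2(\overline K))$, and intersecting with $K\langle x_1,x_2,\ldots\rangle$ yields $T_K(R_\lambda)=T_K(M_2(\overline K))$, as wanted. If $K$ is finite then $R_\lambda$ is a finite ring, so Wedderburn's theorem gives $R_\lambda\cong M_2(F_\lambda)$ with $F_\lambda$ a finite extension of $K$; varying $\lambda$ (solving $\lambda^2(\lambda^2-\delta\varepsilon)=\gamma$ realises $c=\alpha_1^2\beta_1^2\gamma$ for essentially any $\gamma\in\overline K$) the fields $F_\lambda$ contain subfields of $\overline K$ of unbounded degree over $K$, so any polynomial not vanishing on $M_2(\overline K)$ --- its finitely many witnessing matrices have entries in some finite subfield of $\overline K$ --- already fails on a suitable $R_\lambda\cong M_2(F_\lambda)$, hence on $R$. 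So $T_K(R)\subseteq T_K(M_2(\overline K))$ in every case.

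The step I expect to be the main obstacle is the structural identification of $R_\lambda$ --- proving it is a genuine full-degree, semisimple form of a full matrix algebra $M_2(F_\lambda)$ rather than a smaller or non-semisimple subalgebra --- together with the field-of-definition bookkeeping: the coefficients $\alpha_i,\beta_i$ lie in $\overline K$, so $R_\lambda$ need not be $M_2(\overline K)$ itself but only polynomial-identity-equivalent to it, and the finite base field case, where one lets the coefficient field grow instead of using a single convenient $\lambda$, must be handled separately.
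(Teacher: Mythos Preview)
Your argument is correct, but it follows a genuinely different route from the paper's. Both proofs dispose of the inclusion $T_K(M_2(\overline K))\subseteq T_K(R)$ in the same trivial way. For the reverse inclusion, however, the paper is much more elementary. In the infinite case it passes to $\overline R=\overline K\otimes_K R$ via Lemma~\ref{Lemma 1} and then simply checks by a direct linear computation that $I,U_0,V_0,U_0V_0$ are linearly independent for a single $t_0\in\overline K$ with $t_0\neq 0,\pm1$, so $M_2(\overline K)$ is a quotient of $\overline R$; no structure theory is needed. In the finite case it never specialises $t$ at all: instead it uses a Vandermonde argument with the central element $[X,Y]^2$ to show that any identity of $R$ already holds on $M_2(K(t))$, and then invokes Lemma~\ref{Lemma 1} over the infinite field $K(t)$.

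Your approach trades these hands-on computations for standard PI machinery: you specialise $t\mapsto\lambda$, use the non-nilpotent commutator to rule out triangularisability, invoke Burnside to get $\overline K R_\lambda=M_2(\overline K)$, and then Rowen's theorem and Kaplansky's theorem to pin down $R_\lambda$ as a $4$-dimensional central simple algebra over the intermediate field $F_\lambda$; the finite case is handled by letting $c=[X(\lambda),Y(\lambda)]^2$ sweep out all of $\overline K^\times$, hence forcing $F_\lambda$ to contain any prescribed finite subfield. This is more conceptual and would generalise more readily, but it invokes several nontrivial theorems where the paper gets by with a $4\times4$ linear system and a Vandermonde determinant. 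One small point worth making explicit in your finite case: $R_\lambda$ is finite because the entries of $X(\lambda),Y(\lambda)$ lie in the finite extension $K(\alpha_1,\alpha_2,\beta_1,\beta_2,\lambda)$ of $K$; you use this when appealing to Wedderburn.
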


\begin{proof}
Since the algebra $R$ is a $K$-subalgebra of $M_2(\overline{K}[t])$, it satisfies all polynomial identities of
$M_2(\overline{K}[t])$. By Lemma \ref{Lemma 1} the $\overline{K}$-algebras $M_2(\overline{K}[t])$ and
$M_2(\overline{K})$ have the same polynomial identities. Hence $T_K(M_2(\overline{K}))\subseteq T_K(R)$.

First, let the base field $K$ be infinite. By Lemma \ref{Lemma 1} the $K$-algebras $R$ and $\overline{R}$
have the same polynomial identities. Hence, the inclusion in the opposite direction
$T_K(R)\subseteq T_K(M_2(\overline{K}))$ would follow if we
show that the $\overline{K}$-algebra
$M_2(\overline{K})$ is a homomorphic image of the $\overline{K}$-algebra $\overline{R}$.
It is sufficient to see that the matrices $I,X_0=X(t_0),Y_0=Y(t_0),X_0Y_0$
are linearly independent for some $t_0\in\overline{K}$. Equivalently, we may consider the matrices
$I, U_0=U(t_0), V_0=V(t_0),U_0V_0$.
Let $t_0\in\overline{K}$, $t_0\not=0,\pm 1$, and let for some $\xi_0,\xi_1,\xi_2,\xi_3\in\overline{K}$
\[
0=\xi_0I+\xi_1U_0+\xi_2V_0+\xi_3U_0V_0
=\left(\begin{matrix}
\xi_0+\delta\xi_1+t_0^2\xi_3&t_0(\xi_1+\varepsilon\xi_3)\\
t_0\xi_2&\xi_0+\varepsilon\xi_2\\
\end{matrix}\right).
\]
We derive consecutively
\[
\xi_2=0,\quad \xi_0=0,\quad \xi_1+\varepsilon\xi_3=0,\quad \delta\xi_1+t_0^2\xi_3=0.
\]
Since $\delta,\varepsilon=0,1$ and $t_0\not=0,\pm 1$ we obtain $t_0^2-\delta\varepsilon\not=0$.
Hence $\xi_1=\xi_3=0$ and
$I,X_0,Y_0,X_0Y_0$ are linearly independent and span $M_2(\overline{K})$ for any $t_0\not=0,\pm 1$.
Therefore $M_2(\overline{K})$ is a homomorphic image of $\overline{R}$.

Now, let the field $K$ be finite. We mimic the standard Vandermonde arguments used in theory of PI-algebras.
Let $f(x_1,\ldots,x_n)\in K\langle x_1,x_2,\ldots\rangle$ be a polynomial identity for the algebra $R$.
Hence $f(r_1,\ldots,r_n)=0$ for any $r_1,\ldots,r_n\in R$. We are interested in the case when
$r_1\in R$ has the form
\[
r_1=h_0I+h_1X+h_2Y+h_3XY,
\]
where $h_i=h_i([X,Y]^2)$, $i=0,1,2,3$, are polynomials in $[X,Y]^2=\alpha_1^2\beta_1^2t^2(t^2-\delta\varepsilon)I$.
In this way, $f(r_1,\ldots,r_n)=0$
is an evaluation of $g=f(x_{10}+x_{11}+x_{12}+x_{13},x_2,\ldots,x_n)$.
We write $g$ as
\[
g=g(x_{10},x_{11},x_{12},x_{13},x_2,\ldots,x_n)=\sum_{j=0}^mg_j(x_{10},x_{11},x_{12},x_{13},x_2,\ldots,x_n),
\]
where $g_i(x_{10},x_{11},x_{12},x_{13},x_2,\ldots,x_n)$ is the homogeneous component of $g$
of degree $j$ in $x_{10}$.
Evaluating $f$ on
\[
r_1=[X,Y]^{2i}I+h_1X+h_2Y+h_3XY,\quad r_2,\ldots,r_n\in R,
\]
we obtain
\[
f(r_1,\ldots,r_n)=g([X,Y]^{2i}I,h_1X,h_2Y,h_3XY,r_2,\ldots,r_n)
\]
\[
=\sum_{j=0}^m[X,Y]^{2ij}g_j(I,h_1X,h_2Y,h_3XY,r_2,\ldots,r_n)
\]
\[
=\sum_{j=0}^m(\alpha_1^2\beta_1^2t^2(t^2-\delta\varepsilon))^{ij}g_j(I,h_1X,h_2Y,h_3XY,r_2,\ldots,r_n)=0.
\]
For $i=0,1,\ldots,m$ we obtain $m+1$ equations which form a linear homogeneous system with unknowns
$g_j(I,h_1X,h_2Y,h_3XY,r_2,\ldots,r_n)$, $j=0,1,\ldots,m$. Its determinant $\Delta$
is equal to the Vandermonde determinant
with entries
\[
v_{ij}=(\alpha_1^2\beta_1^2t^2(t^2-\delta\varepsilon))^{ij},\quad i,j=0,1,\ldots,m,
\]
and is different from zero because the elements $v_{1j}$ are pairwise different.
Therefore the zero solution
\[
g_j(I,h_1X,h_2Y,h_3XY,r_2,\ldots,r_n)=0,\quad j=0,1,\ldots,m,
\]
is the only solution of the system over the field of rational functions $K(t)$ and hence also over $K[t]$.
This means that the homogeneous component $g_j$ of degree $j$ in $x_{10}$
of the polynomial identity $g(x_{10},x_{11},x_{12},x_{13},x_2,\ldots,x_n)$ of $R$ vanishes when evaluated for
\[
x_{10}=I,x_{11}=h_1X,x_{12}=h_2Y,x_{13}=h_3XY,x_2=r_2,\ldots,x_n=r_n.
\]
With the same arguments we conclude that the multihomogeneous components
$g_{(i_0,i_1,i_2,i_3)}$ of $g$ of degree $(i_0,i_1,i_2,i_3)$ in $(x_{10},x_{11},x_{12},x_{13})$
vanish when evaluated for
\[
x_{10}=I,x_{11}=X,x_{12}=Y,x_{13}=XY,x_2=r_2,\ldots,x_n=r_n.
\]
Since $I,X,Y,XY$ are linearly independent in $M_2(K(t))$, they form a basis over
$K(t)$ and every element $s_1$ in $M_2(K(t))$ has the form
\[
s_1=h_0I+h_1X+h_2Y+h_3XY,\quad h_0,h_1,h_2,h_3\in K(t).
\]
Hence
\[
f(s_1,r_2,\ldots,r_n)= f(h_0I+h_1X+h_2Y+h_3XY,r_2,\ldots,r_n)
\]
\[
=g(h_0I,h_1X,h_2Y,h_3XY,r_2,\ldots,r_n)
\]
\[
=\sum g_{(i_0,i_1,i_2,i_3)}(h_0I,h_1X,h_2Y,h_3XY,r_2,\ldots,r_n)
\]
\[
=\sum h_0^{i_0}h_1^{i_1}h_2^{i_2}h_3^{i_3}g_{(i_0,i_1,i_2,i_3)}(I,X,Y,XY,r_2,\ldots,r_n)=0
\]
for any $s_1\in M_2(K(t))$, $r_2,\ldots,r_n\in R$. Continuing in this way, we conclude that
$f(s_1,s_2,\ldots,s_n)=0$ for any $s_1,\ldots,s_n\in M_2(K(t))$ and $f(x_1,\ldots,x_n)$ is a polynomial identity for
$M_2(K(t))$. Hence $T_K(R)=T_K(M_2(K(t)))$.
Since the field $K(t)$ is infinite, Lemma \ref{Lemma 1} gives
\[
T_{K(t)}(M_2(K(t)))=T_{K(t)}(M_2(\overline{K}(t))),\quad
T_{\overline{K}}(M_2(\overline{K}(t)))=T_{\overline{K}}(M_2(\overline{K})),
\]
and this implies $T_K(R)=T_K(M_2(\overline{K}))$ because
\[
T_K(R)=T_K(M_2(K(t)))=T_{K(t)}(M_2(K(t)))\cap K\langle x_1,x_2,\ldots\rangle
\]
\[
=T_{K(t)}(M_2(\overline{K}(t)))\cap K\langle x_1,x_2,\ldots\rangle
=T_{\overline{K}}(M_2(\overline{K}(t)))\cap K\langle x_1,x_2,\ldots\rangle
\]
\[
=T_{\overline{K}}(M_2(\overline{K}))\cap K\langle x_1,x_2,\ldots\rangle
=T_K(M_2(\overline{K})).
\]
\end{proof}

Now we shall prove the main result of our paper.

\begin{theorem}\label{Main Theorem}
Let $K$ be a field of any characteristic and let $a,b,c,d$ be arbitrary elements of $K$.
Then the algebra
\[
F=K\langle x,y\mid x^2+ax+b=0,y^2+cy+d=0\rangle
\]
can be embedded into $M_2(\overline{K}[t])$. The algebras $F$ and $M_2(\overline{K})$ satisfy the same
polynomial identities as $K$-algebras.
\end{theorem}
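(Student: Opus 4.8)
The plan is to make $F$ act on $\overline K[t]^{\,2}$ through the matrices $X,Y$ of the previous section. \textbf{Construction of the map.} Given $a,b,c,d\in K$, let $\lambda_1,\lambda_2\in\overline K$ be the roots of $T^2+aT+b$ and $\mu_1,\mu_2\in\overline K$ the roots of $T^2+cT+d$. If $\lambda_1\neq\lambda_2$ put $\delta=1$, $\alpha_1=\lambda_1-\lambda_2$, $\alpha_2=\lambda_2$; if $\lambda_1=\lambda_2=\lambda$ put $\delta=0$, $\alpha_1=1$, $\alpha_2=\lambda$; determine $\varepsilon,\beta_1,\beta_2$ from $\mu_1,\mu_2$ in the same fashion. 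Using $X=\alpha_1U+\alpha_2I$ and $U^2=\delta U$ one checks immediately that $X^2+aX+bI=0$, and likewise $Y^2+cY+dI=0$, so by the universal property of the free product there is a $K$-algebra homomorphism $\varphi\colon F\to M_2(\overline K[t])$ with $\varphi(x)=X$, $\varphi(y)=Y$. It remains to prove that $\varphi$ is injective; the statement about identities will then follow at once.

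\textbf{Structure of $F$.} As a free product of two two-dimensional algebras, $F$ has a $K$-basis consisting of $1$ together with the alternating words $x,y,xy,yx,xyx,yxy,\dots$. A direct computation from the relations $x^2=-ax-b$ and $y^2=-cy-d$ shows that
\[
\zeta:=xy+yx+cx+ay
\]
commutes with $x$ and with $y$, hence lies in $Z(F)$, and that every alternating word of length $\geq 2$ can be rewritten step by step as a $K[\zeta]$-linear combination of $1,x,y,xy$; therefore $F=K[\zeta]\cdot 1+K[\zeta]x+K[\zeta]y+K[\zeta]xy$. Ordering the alternating-word basis by length and observing that the length-$2n$ part of $\zeta^{n}$ is $(xy)^{n}+(yx)^{n}$, a leading-term argument shows the four summands are $K[\zeta]$-independent, so $F$ is a free $K[\zeta]$-module of rank $4$; a further short computation with the same basis shows $Z(F)=K[\zeta]$, a polynomial ring in one variable.

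\textbf{Injectivity.} Since $\zeta$ is central, $\varphi(\zeta)$ commutes with $X$ and $Y$; as $I,X,Y,XY$ are linearly independent over $\overline K(t)$ (established in the proof of Proposition~\ref{Proposition 2}), the matrices $X,Y$ generate the algebra $M_2(\overline K(t))$, whose centralizer of $\{X,Y\}$ equals its centre $\overline K(t)I$. Hence $\varphi(\zeta)=q(t)I$ for some $q\in\overline K[t]$. Moreover $[x,y]^2\in Z(F)=K[\zeta]$, say $[x,y]^2=P(\zeta)$, and applying $\varphi$ gives $P(q(t))\,I=[X,Y]^2=\alpha_1^2\beta_1^2t^2(t^2-\delta\varepsilon)I$; since the right-hand side is a nonconstant polynomial multiple of $I$, the polynomial $q$ is nonconstant, so $s\mapsto q(t)$ is an injective ring map $K[\zeta]\cong K[s]\hookrightarrow\overline K[t]$. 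Now write an arbitrary $f\in F$ as $f=f_0(\zeta)+f_1(\zeta)x+f_2(\zeta)y+f_3(\zeta)xy$. Then
\[
\varphi(f)=f_0(q(t))I+f_1(q(t))X+f_2(q(t))Y+f_3(q(t))XY
\]
with all coefficients $f_i(q(t))$ in $\overline K[t]$, so $\varphi(f)=0$ forces $f_i(q(t))=0$ for each $i$ by the linear independence of $I,X,Y,XY$, and then $f_i=0$ because $q$ is nonconstant. Thus $\varphi$ is an embedding $F\hookrightarrow M_2(\overline K[t])$.

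\textbf{Identities, and the main obstacle.} Since $\varphi$ is injective, $F$ is isomorphic to $R=\varphi(F)$, the $K$-subalgebra of $M_2(\overline K[t])$ generated by $X$ and $Y$, which by Proposition~\ref{Proposition 2} satisfies exactly the polynomial identities of $M_2(\overline K)$; hence $F$ and $M_2(\overline K)$ satisfy the same polynomial identities as $K$-algebras. I expect the real work to sit in the second step: guessing the correct central element $\zeta$ (it degenerates to $xy+yx$ only when $a=c=0$), carrying out the rewriting of all alternating words, and proving directness of the rank-$4$ decomposition uniformly in every characteristic, including characteristic $2$. Once this structural fact is in hand, the embedding and the identity statement are short deductions from Proposition~\ref{Proposition 2}.
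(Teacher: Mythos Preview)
Your argument is correct, but it proceeds by a genuinely different route from the paper's. The paper never proves the rank-$4$ module structure over $K[\zeta]$ \emph{before} the embedding; instead it passes to $\overline F=\overline K\langle u,v\mid u^2=\delta u,\ v^2=\varepsilon v\rangle$, uses the obvious monomial basis $1,(uv)^p,(vu)^p,(uv)^qu,(vu)^qv$, and checks injectivity of $u\mapsto U,\ v\mapsto V$ by a bare-hands degree count: the explicit formulas $(UV)^p=t^{2(p-1)}UV$, $(UV)^qU=t^{2q}U$, etc., show that the unique highest-degree monomials in the entries of $f(U,V)$ cannot cancel. The $K[\zeta]$-module structure you prove intrinsically is established in the paper only \emph{afterwards} (Theorem~\ref{module over centre}), and there it is read off from the already-established matrix model via Cayley--Hamilton for $X+Y$. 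So you have essentially reversed the logical order: structure theorem first, embedding as a corollary; the paper does embedding first by an elementary calculation, structure theorem as a corollary. Your route has the merit of working directly over $K$ without tensoring up and of yielding Theorem~\ref{module over centre}(i) for free; the paper's route avoids the combinatorics of the leading-term freeness argument that you flag as the hard step.

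One small point: your inference ``$[x,y]^2\in Z(F)=K[\zeta]$'' is correct but not justified in the proposal, and invoking the $2\times 2$ identity $[[x_1,x_2]^2,x_3]=0$ here would be circular. It is easy to fix: from $x^2=-ax-b$ one gets $[x,y]\,x=-(x+a)[x,y]$ and similarly $[x,y]\,y=-(y+c)[x,y]$, whence $[x,y]^2$ commutes with $x$ and $y$. Alternatively (and this is what the paper does in Theorem~\ref{module over centre}), apply Cayley--Hamilton to $X+Y$ to get directly $\varphi(\zeta)=\bigl(b+d-\det(X+Y)\bigr)I=\bigl(\text{const}+\alpha_1\beta_1t^2\bigr)I$, visibly nonconstant; this bypasses any appeal to $[x,y]^2$.
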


\begin{proof}
{\bf Step 1.}
The equation $f(x)=x^2+ax+b=0$ has two zeros $\eta_1,\eta_2$ in an extension of $K$ and $f(x)=(x-\eta_1)(x-\eta_2)$
in $\overline{K}[x]$. If $\eta_1=\eta_2$ we change the variable
$x$ in $\overline{K}[x]$ by $x=u+\eta_1$ and obtain that
$f(x)=u^2$. Similarly, if $\eta_1\not=\eta_2$, we change $x$ by
$x=\alpha_1u+\alpha_2$, $\alpha_1=\eta_2-\eta_1,\alpha_2=\eta_1$ in $\overline{K}$.
Then $f(x)=\alpha_1^2(u^2-u)=(a^2-4b)(u^2-u)$.
Hence, working in the $\overline{K}$-algebra
\[
\overline{F}=\overline{K}\langle x,y\mid x^2+ax+b=0,y^2+cy+d=0\rangle
\]
which contains $F$, we may assume that it is generated by $u$ and $v$
such that either $u^2=0$ or $u^2=u$ and similarly either $v^2=0$ or $v^2=v$.
Hence  $\overline{F}=\overline{K}\otimes_KF$ has the presentation
\[
\overline{F}=\overline{K}\langle u,v\mid u^2=\delta u,v^2=\varepsilon v\rangle,
\]
where $\delta,\varepsilon=0,1$.
As a $\overline{K}$-vector space $\overline{F}$ is spanned on the monomials
\[
1,\quad (uv)^p,(vu)^p,p\geq 1,\quad (uv)^qu,(vu)^qv,\quad q\geq 0.
\]
(It follows from the general theory of free products or of Gr\"obner bases that
these elements form a $\overline{K}$-basis of $\overline{F}$.)

{\bf Step 2.}
We consider the matrices
\[
U=\left(\begin{matrix}
\delta&t\\
0&0\\
\end{matrix}\right),\quad
V=\left(\begin{matrix}
0&0\\
t&\varepsilon\\
\end{matrix}\right)
\]
in $M_2(\overline{K}[t])$. As in Step 1 we choose $\alpha_1,\alpha_2,\beta_1,\beta_2\in\overline{K}$,
$\alpha_1,\beta_1\not=0$,
such that the matrices
\[
X=\alpha_1U+\alpha_2I,\quad Y=\beta_1V+\beta_2I
\]
satisfy the equations $X^2+aX+bI=0$, $Y^2+cY+dI=0$.
Let $R$ be the $K$-subalgebra of $M_2(\overline{K}[t])$ generated by $X$ and $Y$.
The $\overline{K}$-subalgebra $\overline{R}$ of $M_2(\overline{K}[t])$ is generated also by $U$ and $V$.
Since $U^2=\delta U$, $V^2=\varepsilon V$ the mapping
\[
u\to U,\quad v\to V
\]
extends to a homomorphism $\varphi$ of
the $\overline{K}$-algebra $\overline{F}$ to the $\overline{K}$-algebra $\overline{R}$.

{\bf Step 3.}
Direct computations give that
\[
(UV)^p=\left(\begin{matrix}
t^{2p}&\varepsilon t^{2p-1}\\
0&0\\
\end{matrix}\right)=t^{2(p-1)}UV,
\quad
(VU)^p=\left(\begin{matrix}
0&0\\
\delta t^{2p-1}&t^{2p}\\
\end{matrix}\right)=t^{2(p-1)}VU,
\]
\[
(UV)^qU=\left(\begin{matrix}
\delta t^{2q}&t^{2q+1}\\
0&0\\
\end{matrix}\right)=t^{2q}U,
\quad
(VU)^qV=\left(\begin{matrix}
0&0\\
t^{2q+1}&\varepsilon t^{2q}\\
\end{matrix}\right)=t^{2q}V.
\]
Hence the maximum degree in $t$ of the entries of a nonzero monomial of degree $k$ in $U,V$ is equal to $k$.

{\bf Step 4.}
If the homomorphism $\varphi:\overline{F}\to\overline{R}$ is not an isomorphism, the kernel
of $\varphi$ contains a nonzero element of $\overline{F}$
\[
f(u,v)=\alpha_0+\sum_{p\geq 1}(\alpha_{1p}(uv)^p+\alpha_{2p}(vu)^p)
+\sum_{q\geq 0}(\beta_{1q}(uv)^qu+\beta_{2q}(vu)^qv),
\]
$\alpha_0,\alpha_{ip},\beta_{iq}\in \overline{K}$. If $\deg_{u,v}(f)=2k$, then
at least one of the coefficients $\alpha_{1k}$ and $\alpha_{2k}$ is different from 0.
The diagonal entries of the matrix $f(U,V)$ contain, respectively, the monomials
$\alpha_{1k}t^{2k}$ and $\alpha_{2k}t^{2k}$. At least one of these monomials is nonzero
and does not cancel with any other monomial of the corresponding entries of $f(U,V)$.
Similarly, if $\deg_{u,v}(f)=2k+1$, then the entries of the other diagonal of the matrix $f(U,V)$ contain,
respectively, the monomials $\beta_{1k}t^{2k+1}$ and $\beta_{2k}t^{2k+1}$. Again, at least
one of these monomials is nonzero and does not cancel with other monomials at the same position.
This implies that $\varphi$ is an isomorphism and $\overline{F}$ can be embedded into
$M_2(\overline{K}[t])$.
This completes the proof in virtue of Proposition \ref{Proposition 2}.
\end{proof}

As in Weiss \cite{W1, W2} the algebra $F$ is a free module of rank 4 over its centre.
We can extend Theorem \ref{Main Theorem} to arbitrary infinite dimensional algebras
generated by two quadratic elements.

\begin{theorem}\label{module over centre}
Let
\[
F=K\langle x,y\mid x^2+ax+b=0,y^2+cy+d=0\rangle
\]

{\rm (i)} The centre $C(F)$ of $F$ is isomorphic to the polynomial algebra in one variable
and $F$ is a free $C(F)$-module of rank $4$ generated by $1,x,y,xy$.

{\rm (ii)} All proper ideals of $F$ are of finite codimension in $F$.
\end{theorem}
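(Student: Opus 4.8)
The plan is to deduce (i) from the description of $\overline{F}$ obtained in the proof of Theorem~\ref{Main Theorem} and then to descend to $K$ by faithful flatness, and to deduce (ii) from (i) by localizing at the centre and using the embedding into $M_2$. Keeping the notation of the proof of Theorem~\ref{Main Theorem}, so that $\overline{F}=\overline{K}\langle u,v\mid u^2=\delta u,\,v^2=\varepsilon v\rangle$ with the monomial basis recorded there and $\varphi\colon\overline{F}\to M_2(\overline{K}[t])$ is the isomorphism onto $\overline{R}$ established there (with $\varphi(u)=U$, $\varphi(v)=V$, and $\psi\colon F\to M_2(\overline{K}[t])$ its restriction, $\psi(x)=X$, $\psi(y)=Y$), I set $z=uv+vu-\varepsilon u-\delta v$. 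A direct computation using only $u^2=\delta u$ and $v^2=\varepsilon v$ gives
\[
zu=uz=uvu-\delta\varepsilon u,\qquad zv=vz=vuv-\delta\varepsilon v,
\]
so $z\in C(\overline{F})$; rewriting, $uvu=(z+\delta\varepsilon)u$ and $vuv=(z+\delta\varepsilon)v$, so by induction $(uv)^p=(z+\delta\varepsilon)^{p-1}uv$, $(uv)^qu=(z+\delta\varepsilon)^qu$ (and symmetrically for $(vu)^p$, $(vu)^qv$), together with $vu=z-uv+\varepsilon u+\delta v$. Hence $1,u,v,uv$ span $\overline{F}$ over $\overline{K}[z]$. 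Since $\varphi(z)=UV+VU-\varepsilon U-\delta V=(t^2-\delta\varepsilon)I$, the element $z$ is transcendental over $\overline{K}$, so $\overline{K}[z]\cong\overline{K}[T]$; and the linear independence of $I,U,V,UV$ over $\overline{K}(t)$ (immediate from their matrix forms, as in the proof of Proposition~\ref{Proposition 2}) forces $1,u,v,uv$, and equally $1,x,y,xy$ (the change of basis over $\overline{K}$ has determinant $\alpha_1^2\beta_1^2\ne0$), to be a \emph{free} $\overline{K}[z]$-basis of $\overline{F}$. Writing an arbitrary central element in this free basis and comparing the $\overline{K}[z]$-coordinates of $cu-uc$ and $cv-vc$ then yields $C(\overline{F})=\overline{K}[z]$.

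To descend, I would exhibit a central generator already lying in $F$. From $\alpha_1 u=x-\alpha_2$, $\beta_1 v=y-\beta_2$ and the identities $2\alpha_2+\delta\alpha_1=-a$, $2\beta_2+\varepsilon\beta_1=-c$ valid for the choices made in Step~1 of the previous proof, one computes $\alpha_1\beta_1 z=xy+yx+cx+ay+c_0$ with $c_0=2\alpha_2\beta_2+\varepsilon\alpha_2\beta_1+\delta\alpha_1\beta_2\in\overline{K}$. Therefore
\[
\zeta:=xy+yx+cx+ay=\alpha_1\beta_1 z-c_0
\]
lies in $F$, is central (equivalently $x\zeta=\zeta x=xyx-by-acx-bc$ and symmetrically in $y$, directly from $x^2+ax+b=0$ and $y^2+cy+d=0$), and satisfies $\varphi(\zeta)=\bigl(\alpha_1\beta_1(t^2-\delta\varepsilon)-c_0\bigr)I$, a nonconstant scalar matrix; thus $\zeta$ is transcendental over $K$ and $\overline{K}[\zeta]=\overline{K}[z]=C(\overline{F})$. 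The centre commutes with the base change $\overline{K}\otimes_K(-)$, since $C(F)=\ker(\mathrm{ad}_x)\cap\ker(\mathrm{ad}_y)$ is a finite intersection of kernels of $K$-linear maps; hence $\overline{K}\otimes_K C(F)=C(\overline{F})=\overline{K}[z]=\overline{K}\otimes_K K[\zeta]$, and as $K[\zeta]\subseteq C(F)$ and $\overline{K}$ is faithfully flat over $K$, it follows that $C(F)=K[\zeta]\cong K[T]$. Likewise, $K[\zeta]\,1+K[\zeta]\,x+K[\zeta]\,y+K[\zeta]\,xy\subseteq F$ has scalar extension equal to $\overline{F}$, so it equals $F$, and freeness over $K[\zeta]$ follows from freeness over $\overline{K}[z]$. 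This proves (i).

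For (ii), put $Z=C(F)=K[\zeta]$, $Q=\mathrm{Frac}(Z)$ and $F_Q=F\otimes_Z Q$, a $4$-dimensional $Q$-algebra with centre $Q$ by (i). Since $\psi(\zeta)$ is a nonconstant polynomial in $t$ times $I$, the embedding $F\hookrightarrow M_2(\overline{K}[t])\subseteq M_2(\overline{K}(t))$ carries $Z\setminus\{0\}$ into units, so it extends to an embedding $F_Q\hookrightarrow M_2(\overline{K}(t))$. If $F_Q$ had a nonzero nilpotent ideal, some power of it would be a nonzero square-zero ideal $N$; then for $0\ne n\in N$ the line $L=\mathrm{Im}(n)\subseteq\overline{K}(t)^2$ would satisfy $fL\subseteq L$ for all $f\in F_Q$ (because $(fn)^2=0$), contradicting the fact that $X,Y\in F_Q$ generate $M_2(\overline{K}(t))$ over $\overline{K}(t)$, which has no invariant line. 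Hence $F_Q$ is semiprime, and being Artinian with centre the field $Q$ it is simple. So every nonzero ideal $J$ of $F$ localizes to all of $F_Q$, whence $J\cap Z\ne0$; choosing $0\ne g\in J\cap Z$, the quotient $F/J$ is a homomorphic image of $F/gF$, which is finitely generated over the finite-dimensional $K$-algebra $Z/(g)\cong K[T]/(g)$, and is therefore finite-dimensional over $K$.

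The step I expect to be the genuine obstacle is the descent in (i): one has to control exactly which polynomials in $z$ survive in the $K$-form $F$, and this is precisely what singles out the central element $\zeta=xy+yx+cx+ay$; once that element is in hand, everything else is either bookkeeping over $\overline{K}$ or the routine localization argument used in (ii).
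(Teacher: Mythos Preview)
Your proof is correct. For part~(i) your route is essentially the paper's: both identify the central element $\zeta=xy+yx+cx+ay$ and show that $F$ is free of rank~$4$ over $K[\zeta]$ with basis $1,x,y,xy$. The paper reaches $\zeta$ via the Cayley--Hamilton theorem applied to $X+Y$ in the matrix model, while you compute directly in $\overline{F}$ with $z=uv+vu-\varepsilon u-\delta v$ and then descend by faithful flatness; your descent is in fact more carefully spelled out than the paper's ``Going back to $F$''.

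Part~(ii) is where you genuinely diverge. The paper stays inside the matrix model: given a nonzero $W=\rho_0I+\rho_1U+\rho_2V+\rho_3UV\in\overline{J}$, it computes $[W,U]^2$, $[W,V]^2$ and, for a handful of degenerate coefficient patterns, $[W,U+V]^2$; each of these is a scalar matrix, and a short case analysis shows at least one of them is nonzero, producing a nonzero central element of $\overline{J}$ directly. You instead localize at the centre, prove that $F_Q$ is simple (semiprime via the invariant-line argument in $M_2(\overline{K}(t))$, then Artinian with centre a field), and conclude that every nonzero ideal of $F$ meets $Z$. Your argument is more structural and would transplant unchanged to any algebra finite free over a polynomial centre and embedding into matrices over a field; the paper's argument is more elementary, needing no Wedderburn--Artin, but is tied to the explicit $2\times2$ realization and its short case list. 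Both land on the same key intermediate fact, $J\cap C(F)\ne 0$, and finish identically from there.
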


\begin{proof}
(i) We work with the matrix representation of $F$ as the $K$-subalgebra $R$ of $M_2(\overline{K}[t])$
generated by the matrices
\[
X=\left(\begin{matrix}
\alpha_2+\delta\alpha_1&\alpha_1t\\
0&\alpha_2\\
\end{matrix}\right),\quad
Y=\left(\begin{matrix}
\beta_2&0\\
\beta_1t&\beta_2+\varepsilon\beta_1\\
\end{matrix}\right).
\]
The Cayley-Hamilton theorem gives
\[
(X+Y)^2-(\text{tr}(X+Y))(X+Y)+\text{det}(X+Y)I=0,
\]
\[
(X+Y)^2-(2(\alpha_2+\beta_2)+\delta\alpha_1+\varepsilon\beta_1)(X+Y)
\]
\[
+((\alpha_2+\beta_2+\delta\alpha_1)(\alpha_2+\beta_2+\varepsilon\beta_1)-\alpha_1\beta_1t^2)I=0.
\]
Using that in $F$
\[
x^2+ax+b=0,\quad y^2+cy+d=0,
\]
which become in $R$
\[
X^2-(2\alpha_2+\delta\alpha_1)X+\alpha_2(\alpha_2+\delta\alpha_1)I=0,
\]
\[
Y^2-(2\beta_2+\varepsilon\beta_1)Y+\beta_2(\beta_2+\varepsilon\beta_1)I=0,
\]
we obtain
\[
XY+YX+cX+aY+f(t^2)I=0,\quad f(t^2)\in \overline{K}[t^2].
\]
Hence the element
\[
Z=XY+YX+cX+aY\in R
\]
belongs to the centre of $R$. The equations
\[
X^2=-(aX+bI),\quad Y^2=-(cY+dI),\quad YX=-(cY+dI)cX-aY-XY
\]
allow to express every element of $R$ as a linear combination of $I,X,Y,XY$
with coefficients from $K[Z]$, i.e., $R$ is a 4-generated $K[Z]$-module. It is easy to see that
this module is free and the centre of $R$ coincides with $K[Z]$.
Going back to $F$ we obtain that
\[
C(F)=K[z],\quad z=xy+yx+cx+ay
\]
and $F$ is a free $C(F)$-module freely generated by $1,x,y,xy$.
In the special case $X=U$, $Y=V$ the above formulas are quite simple:
\[
(U+V)^2-(\delta+\varepsilon)(U+V)+(\delta\varepsilon-t^2)I=0,
\]
\[
Z=XY+YX-\varepsilon U-\delta V=(t^2-\delta\varepsilon)I\in C(\overline{R})
\]
and the elements of $\overline{R}$ and the multiplication rules between them are given by
\[
W=\rho_0Z+\rho_1U+\rho_2V+\rho_3UV,\quad Z=(t^2-\delta\varepsilon)I, \rho_i\in \overline{K}[Z],
\]
\[
U^2=\delta U,\quad V^2=\varepsilon V,\quad
VU=-(U-\delta I)(V-\varepsilon I)+t^2I.
\]

(ii) If $J$ is an ideal of $R$, then $\overline{J}$ is an ideal of $\overline{R}$
and the codimension of $J$ in $R$ as a $K$-vector space and of $\overline{J}$ in
$\overline{R}$ as a $\overline{K}$-vector space is the same. Hence we may work in $\overline{R}$
and show that $\overline{J}$ is of finite codimension in $\overline{R}$.
Let
\[
0\not=W=\rho_0I+\rho_1U+\rho_2V+\rho_3UV\in\overline{J},\quad \rho_i\in \overline{K}[Z].
\]
We shall show that $\overline{J}$ contains a nonzero central element $W_0=\tau I$, $\tau\in \overline{K}[Z]$.
Hence, modulo $\overline{J}$, the elements of $\overline{R}$ have the form
\[
\sigma_0I+\sigma_1U+\sigma_2V+\sigma_3UV,\quad \sigma_i\in \overline{K}[Z],\deg_Z\sigma_i<\deg_Z\tau,
\]
and the factor algebra $\overline{R}/\overline{J}$ is finite dimensional. This is true, if
$\rho_1=\rho_2=\rho_3=0$ in the presentation of the element $W\in\overline{J}$. Hence we may assume that
$W$ is not central. Direct computations give
\[
[W,U]^2=\rho_2(\rho_2+\delta\rho_3)t^2(t^2-\delta\varepsilon)I,
\quad [W,V]^2=\rho_1(\rho_1+\varepsilon\rho_3)t^2(t^2-\delta\varepsilon)I.
\]
Hence, if $\rho_1,\rho_2,\rho_2+\delta\rho_3, \rho_1+\varepsilon\rho_3\not=0$, then $\overline{J}$ contains a nonzero
central element. We have to consider three more cases:

(1) $\rho_1=\rho_2=0$. Hence $\rho_3\not=0$ and $[W,U+V]^2=\rho_3^2t^2(\delta\varepsilon-t^2)I\not=0$.

(2) $\rho_1=0$, $\delta=1$, $\rho_2=-\rho_3\not=0$. Then $[W,U+V]^2=\rho_3^2t^2(\varepsilon-t^2)I\not=0$.
(The case $\rho_2=0$, $\varepsilon=1$, $\rho_1=-\rho_3\not=0$ is similar.)

(3) $\delta=\varepsilon=1$, $\rho_1=\rho_2=\rho_3\not=0$. Then
$[W,U+V]^2=\rho_3^2t^4(1-t^2)I\not=0$.

In all the cases $\overline{J}$ contains a nonzero central element and hence is of finite codimension in $\overline{R}$.
\end{proof}

\begin{remark}
An embedding similar to that in Theorem \ref{Main Theorem} appears also in group theory.
For example, it is well known that for $t\in\mathbb Z$, $t\geq 2$, the matrices
\[
U=\left(\begin{matrix}
1&t\\
0&1\\
\end{matrix}\right),\quad
V=\left(\begin{matrix}
1&0\\
t&1\\
\end{matrix}\right)
\]
generate a free subgroup of $SL_2({\mathbb Z})$.
\end{remark}

\begin{remark}
Our embedding of the monomial algebra
\[
F=K\langle x,y\mid x^2=0,y^2=0\rangle
\]
is the same as the embedding which follows from the proof of Belov \cite{Be}.
When the characteristic of $K$ is different from 2, the embedding of Weiss \cite{W1, W2}
into $M_2(K[v])$ of the algebra
\[
F=K\langle x,y\mid x^2=x,y^2=y\rangle
\]
generated by two idempotents is given by the formulas
\[
x\to
\left(\begin{matrix}
1+v&t\\
1&1-v\\
\end{matrix}\right),\quad
y\to
\left(\begin{matrix}
1+v&-t\\
-1&1-v\\
\end{matrix}\right),
\]
where $t=1-v^2$.
(In fact these matrices are idempotents only up to a multiplicative constant
because they satisfy the equations $x^2=2x$ and $y^2=2y$.
Clearly, this is not essential for the embedding of $F$ into $M_2(K[v])$.)
Then $F$ is isomorphic to the algebra of matrices of the form
\[
\left(\begin{matrix}
f_1+f_3v&t(f_2-f_4v)\\
f_2+f_4v&f_1-f_3v\\
\end{matrix}\right),
\]
where $f_i\in K[t]$. Hence our embedding of $F$ is simpler than that of Weiss.
\end{remark}

\begin{remark}
Fixing an admissible ordering on the set of monomials $\langle x_1,\ldots,x_d\rangle$,
the transfer of combinatorial results for a monomial ideal $J$ of $K\langle x_1,\ldots,x_d\rangle$ to an arbitrary ideal of
$K\langle x_1,\ldots,x_d\rangle$  with the same set of leading monomials as $J$ does not hold automatically.
For example, Irving \cite{I} showed that the algebra with presentation
\[
B=K\langle x,y\mid x^2=0, yxy=x\rangle
\]
satisfies the polynomial identities of $M_n(K)$ (or of $M_n(\overline{K})$ if $K$ is finite)
for a suitable $n$ but cannot be embedded into any matrix algebra
$M_k(C)$ over a commutative algebra $C$. More precisely, it follows from his proof that $B$ satisfies
the polynomial identity $[x_1,x_2][x_3,x_4][x_5,x_6]=0$ which as is well known generates the T-ideal of the $3\times 3$
upper triangular matrices. Hence $B$ satisfies all polynomial identities of $M_3(K)$ (or of $M_3(\overline{K})$ if $K$ is finite).
As a vector space $B$ has a basis consisting of $y^a,y^ax,xy^{a+1},xy^{a+1}x$, $a\geq 0$, which is the same as of the monomial algebra
\[
B_0=K\langle x,y\mid x^2=0, yxy=0\rangle.
\]
By the result of Borisenko \cite{Bo} the algebra $B_0$ can be embedded into a matrix algebra over $K[t]$.
It is interesting to mention that $B_0$ can be realized as a homomorphic image of a subalgebra of $M_3(K[t])$
in the following way. Let $S$ be the algebra generated by
\[
X=\left(\begin{matrix}
0&1&0\\
0&0&1\\
0&0&0\\
\end{matrix}\right),
\quad
Y=\left(\begin{matrix}
0&0&0\\
0&t&0\\
0&0&0\\
\end{matrix}\right).
\]
Then $YXY=0$. Every vector subspace of
$K[t]e_{13}$
is an ideal of $S$. Choosing
\[
J=tK[t]e_{13}=\left(\begin{matrix}
0&0&tK[t]\\
0&0&0\\
0&0&0\\
\end{matrix}\right)
\]
we obtain that $X^2\in J$ and it is easy to see that $B_0\cong S/J$.
\end{remark}

\section*{Acknowledgements}

The first author is grateful to the Department of Mathematics of
the University of Miskolc for the warm hospitality during his
visit when a part of this project was carried out.

\end{document}